\documentclass[12pt]{elsarticle} 
\usepackage{graphicx}
\usepackage{amsmath}
\usepackage{bm}
\usepackage{amssymb}
\usepackage{amsfonts}
\usepackage{amsthm}
\usepackage{placeins}
\usepackage{afterpage}
\newtheorem{Theorem}{Theorem}[section]
\newtheorem{Lemma}{Lemma}[section]

\def\ee{\end{eqnarray*}}
\def\be{\begin{eqnarray*}}
\def\bee{\end{eqnarray}}
\def\bbe{\begin{eqnarray}}
\def\ea{\end{align*}}
\def\ba{\begin{align*}}
\setcounter{equation}{0}
\makeatletter
\let\today\relax
\def\ps@pprintTitle{%
    \let\@oddhead\@empty
    \let\@evenhead\@empty
    \def\@oddfoot{\footnotesize\itshape
      \hfill\today}
    \let\@evenfoot\@oddfoot
    }
\makeatother

\makeatletter
\newcommand{\bal}{\@ifstar{\@bals}{\@bal}}
\def\@bals#1\eal{\begin{align*}#1\end{align*}}
\def\@bal#1\eal{\begin{align}#1\end{align}}
\makeatletter
\def\A{A} 
\def\AA{\mathcal A}
\def\B{B}
\def\L{\mathcal{L}}
\def\K{\mathcal K}

\def\g{ g}
\def\v{v} 
\def\u{u}


    \def\p{\partial}



\DeclareMathOperator*{\esssup}{ess\,sup}

\makeatletter
\newcommand{\subjclass}[2][1991]{%
  \let\@oldtitle\@title%
  \gdef\@title{\@oldtitle\footnotetext{#1 \emph{Mathematics subject classification.} #2}}%
}
\makeatother

\begin{document}

\title{Strong solutions  of fractional Boussinesq equations in an exterior domain  }

\author{Zhi-Min Chen}
 \ead{zmchen@szu.edu.cn}
\author{Qiuyue Zhang}

\address{School  of Mathematical Sciences, Shenzhen University, Shenzhen 518060,  China}%


\begin{abstract} A thermal convection fluid motion in the three-dimensional  domain exterior to a sphere  is considered. A purely conductive steady state arises due to the fluid heated from the sphere.  A fractional equation system is introduced by using spectral presentation. The existence of small strong solutions  in a Hilbert space is obtained. The strong solution existence  implies  the local stability of the steady state, which attracts asymptotically  the flows  evolving initially  from the vector fields close to the steady state.

 \begin{keyword}Local stability, asymptotic behaviour, strong solutions, fractional Boussinesq equation, exterior domain, purely conductive steady state
 \end{keyword}
\end{abstract}
\subjclass[2020]{35B35, 35B40, 35D35,  35Q35, 76D99.}

\maketitle

\section{Introduction}
Consider a large-scale atmospheric  fluid motion  in the exterior domain $\Omega = \{ x \in R^3; \,\, |x|>1\}$ around an earth surface $\p\Omega = \{ x \in R^3; \,\, |x|=1\}$, which is heated at a temperature $\alpha>0$. The motion is in a  non-dimensional form and the temperature at the infinity is assumed to be zero.  Thus  flow is driven by a buoyant force determined by  the temperature  $\alpha$  and the gravity
$$ g(x) = g_0 \nabla \frac1{|x|}$$
with a gravitational constant $g_0$.  Here, for simplicity, $g_0=1$ is assumed and   the sphere temperature $\alpha$ is  accepted  to be a constant, which  gives rise to a purely conductive steady state
with zero fluid velocity. 

In this paper, we are interested in the local stability of the purely conductive steady state when initial fluid velocity and temperature are  close to the state by considering the existence of small  strong solutions in a Hilbert space.
This  fluid motion in the exterior domain $\Omega$  is  governed by the following Boussinesq equation system
\bbe \left.\begin{array}{ll}
\p_t \u + \u\cdot \nabla \u = \Delta \u -\nabla p -\alpha    \g T,\,\, \nabla \cdot \u=0,\,\, &\label{a1}
\\ \p_t T +\u\cdot \nabla T =\Delta T,\,\,&
\\
 \u|_{\p\Omega}=0,\,\, T|_{\p\Omega}=\alpha  . &
\end{array}\right\}
\bee
Here  the time-dependent functions $\u$, $T$ and $p$ represent, respectively,  the  unknown velocity, temperature and pressure.

The  purely convective steady-state solution of (\ref{a1}) is denoted by   $(\hat u,\hat p, \hat T)$, which is subject to the equations
\bbe \hat u=0,\,\,\,  \nabla \hat p= -\alpha    \g \hat T,\,\, \Delta \hat T=0, \,\,\hat T|_{\p\Omega}=\alpha   ,
\bee
and hence is  expressed explicitly as
\bbe \hat u=0,\,\,\, \hat T=\frac \alpha    {|x|}, \,\, \hat p(x) = -\frac{ \alpha   ^2}{ 2 |x|^2}.
\bee
 Local stability of the purely steady-state flow was studied by Hishida and Yamada \cite{HY} and Hishida \cite{H94,H97} with respect to strong solutions, while $L^2$ global stability of the purely steady-state flow has been studied by Chen-Kagei-Miyakawa \cite{Chen1992} with respect to weak solutions.

Let $L^2_\sigma(\Omega)^3$ be the $L^2$-closure of the set $C_{0,\sigma}(\Omega)^3$ of all smooth
solenoidal vector fields with compact supports in  $\Omega$.  Let  $P$ denote the bounded projection operator mapping $L^2(\Omega)^3$ onto  $L^2_\sigma(\Omega)^3$ due to the Helmholtz decomposition \cite{Miya1988}.
By adopting  the perturbation
\bbe (\u, p, T)= (\hat u,\hat p,\hat T)+ (u',p',T')\label{T0}
\bee
 and omitting the superscript primes, equation (\ref{a1}) is   written as
\bbe \left.\begin{array}{ll}
\p_t \u -\Delta \u+\nabla p+a   \g T=-\u\cdot \nabla \u,  &\label{a2}
\\ \p_t T -\Delta T+\u\cdot \nabla \hat T=-\u\cdot \nabla T,&
\\
 \u|_{\p\Omega}=0,\,\, T|_{\p\Omega} =0.
\end{array}\right\}
\bee

To simplify the problem,  the linearized operator of (\ref{a2}) is expressed as
\bbe \L U  \doteq ( -P\Delta \u+Pa   \g T, -\Delta T+\u\cdot \nabla  \hat T ),\,\, U=(u,T), \bee
with the domain
\bbe  D(\L ) \doteq D(\A) \times D(\B)
\bee
for the Stokes operator part
\bbe D(\A)\doteq \left\{ u \in W^{2,2}(\Omega)^3; \nabla\cdot u=0, u|_{\p\Omega}=0\right\},
\bee
and the Laplacian part
\bbe D(\B)\doteq \left\{ T \in  W^{2,2}(\Omega);  T|_{\p\Omega}=0\right\}.
\bee
The operator $\L$ is  similar to the Laplacian in the sense of  the self-adjointness
\bbe \langle \L U  , V\rangle
 &=& \langle -\Delta  \u+ \alpha T\nabla \frac1{|x|},\v\rangle +\langle -\Delta T+\alpha\u\cdot \nabla \frac1{|x|},\tilde T\rangle\nonumber
\\
&=& \langle \u, -\Delta  \v+ \alpha  \tilde T\nabla\frac1{|x|} \rangle   +\langle  T, -\Delta \tilde T +\alpha \v\cdot \nabla \frac1{|x|}\rangle\nonumber
\\
&=& \langle U ,\L V\rangle ,\label{ne1}
\bee
and the positivity
\bbe \langle \L  U,U\rangle &=&\langle A   \u+ \alpha  T\nabla\frac1{|x|},\u\rangle +\langle -\Delta T+\alpha\u\cdot \nabla \frac1{|x|},T\rangle \nonumber
\\ &=&\|\nabla   \u\|_2^2+\|\nabla  T\|_2^2-2\alpha \langle  \frac\u{|x|},\nabla T\rangle \label{pos}
\\ &\ge &\|\nabla  \u\|_2^2+\|\nabla  T\|_2^2-4\alpha   \|\nabla u\|_2\|\nabla T\|_2\nonumber
\\ &\ge &\left(1-2\alpha \right) \|\nabla  U\|_2^2,\label{positive}
\bee
for $1-2\alpha >0$,  $U=(u,T) ,\, V=(v,\tilde T)\in D(\L )$, $\|\cdot \|_r $ the norm of the Lebesgue space $L^r$ over the domain $\Omega$, and   the $L^2$ inner product $\langle \cdot, \cdot\rangle= \|\cdot \|_2^2$.
Here we have used integration by parts, H\"older inequality  and  the estimate (Ladyzhenskaya \cite[page 41]{Lady})
\bbe \int_\Omega \frac{|u(x)|^2}{|x|^2} dx \le 4\int _\Omega |\nabla u(x)|^2 dx. \label{Lady}\bee
Therefore, equation (\ref{a2}) is rewritten as
\bbe \p_tU+ \L  U= f(U)
\bee
for \bbe  f(U)\doteq (-P(u\cdot \nabla u), -u\cdot \nabla T),\,\,\, U=(u,T)\emph{\emph{}}.
\bee
More generally,  we consider the fractional Boussinesq equation
\bbe \p_t U + \L ^\kappa  U = f(U), \label{ne2}
\\
U|_{t=0} =U_0 \doteq (u_0,T_0),\label{ne3}
\bee
by adopting the spectral decomposition (see Yosida \cite[Page 313, Theorem 1]{Yosida})
\bbe
\L ^\kappa  = \int^\infty_0 \lambda^\kappa   dE_\lambda
\bee
due to the self-adjointness  (\ref{ne1}) and the positivity (\ref{positive}). Here $E_\lambda$ denotes  the spectral resolution of the unit determined by the  operator $\L $. The integral formulation of (\ref{ne2})-(\ref{ne3}) is
\bbe U(t)= e^{-t\L^\kappa  } U_0 + \int^t_0 e^{-(t-s)\L ^\kappa }f(U(s)) ds.\label{ne4}
\bee
To state the main result, we need the fractional operator
\bbe \AA^\kappa (u,T) = (\A^\kappa u, B^\kappa T)
\bee
for the fractional Stokes operator $\A^\kappa$ and fractional Laplace operator $B^\kappa$ defined respectively  by their corresponding spectral representations as both $A$ and $B$ are self-adjoint and positive. By the Sobolev imbedding
 \bbe \|u\|_6 \le C \|\nabla u\|_2, \|T\|_6 \le C \|\nabla T\|_2,\label{Sobolev}
 \bee
 for $(u,T)\in D(\AA)=D(\A)\times D(\B)$,
 we define $H^\beta$ the completion of $D(\AA)$ in the norm
$\|\AA^\beta (u,T)\|_2$ for $0\le \beta \le \frac12.$

We are in the position to state the main result.
\begin{Theorem}\label{Th1} For   $\frac 34 < \kappa  \le 1$, equation    (\ref{ne4}) admits a unique solution $ U=(u,T) \in C([0,\infty); H^{\frac54-\kappa })$
 subject to the following properties:
\bbe \esssup_{0<t<\infty} \left(\|\AA ^{\frac54-\kappa } U(t)\|_{2}+t^{1-\frac3{4\kappa }} \|\nabla U(t)\|_{2}\right)<\infty,
\bee
and
\bbe \lim_{t\to\infty}\left( \|\AA ^{\frac54-\kappa }U(t)\|_2 +t^{1-\frac3{4\kappa }} \|\nabla U(t)\|_{2}\right)=0,
\bee
provided the initial data $U_0 \in H^{\frac54-\kappa }$  and $\|\AA ^{\frac54-\kappa } U_0\|_{2}$ is  sufficiently small.

\end{Theorem}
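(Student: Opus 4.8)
The plan is a Kato-type fixed point construction in a time-weighted Hilbert space, followed by a self-improving (limsup) argument for the large-time limit. Put $\theta\doteq\frac54-\kappa$, so that $\frac14\le\theta<\frac12$ and $1-\frac3{4\kappa}=\frac1\kappa\bigl(\frac12-\theta\bigr)\in[0,\frac14)$, and work in the space $X$ of $U\in C([0,\infty);H^{\theta})$ for which
\ba
\|U\|_X\doteq\sup_{t>0}\|\AA^{\theta}U(t)\|_{2}+\sup_{t>0}t^{\,1-\frac3{4\kappa}}\|\nabla U(t)\|_{2}<\infty ,
\ea
aiming to show that $\Phi(U)(t)\doteq e^{-t\L^{\kappa}}U_0+\int_0^t e^{-(t-s)\L^{\kappa}}f(U(s))\,ds$ contracts a small closed ball of $X$ about $0$. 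The engine is a family of smoothing estimates for $e^{-t\L^{\kappa}}$ measured through $\AA$. Since \eqref{pos} together with H\"older and \eqref{Lady} also gives the upper bound $\langle\L U,U\rangle\le(1+2\alpha)\|\nabla U\|_2^2=(1+2\alpha)\langle\AA U,U\rangle$, the quadratic forms of the positive self-adjoint operators $\L$ and $\AA$ are equivalent on their common form domain; by the Heinz--Kato inequality this propagates to $\|\AA^{\beta}V\|_2\simeq\|\L^{\beta}V\|_2$ for $0\le\beta\le\frac12$, and combined with the elementary bound $\sup_{\lambda>0}\lambda^{a}e^{-t\lambda^{\kappa}}\le C_a t^{-a/\kappa}$ applied to the spectral representation of $\L^{\kappa}$ it yields, for $0\le\delta\le\gamma\le\frac12$ and $0\le\delta'\le\frac12$,
\ba
\|\AA^{\gamma}e^{-t\L^{\kappa}}W\|_2\le C\,t^{-\frac{\gamma-\delta}{\kappa}}\|\AA^{\delta}W\|_2,\qquad
\|\AA^{\gamma}e^{-t\L^{\kappa}}\AA^{\delta'}W\|_2\le C\,t^{-\frac{\gamma+\delta'}{\kappa}}\|W\|_2 .
\ea
Taking $(\gamma,\delta)=(\theta,\theta)$ and $(\gamma,\delta)=(\tfrac12,\theta)$ (so $\frac12-\theta=\kappa-\frac34\ge0$) gives $\|e^{-\cdot\,\L^{\kappa}}U_0\|_X\le C\|\AA^{\theta}U_0\|_2$.

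For the nonlinear part I would first establish the bilinear estimate $\|\AA^{-1/4}f(U)\|_2\le C\|\nabla U\|_2^2$: indeed $\|u\cdot\nabla u\|_{3/2}+\|u\cdot\nabla T\|_{3/2}\le\|u\|_6(\|\nabla u\|_2+\|\nabla T\|_2)\le C\|\nabla U\|_2^2$ by H\"older and \eqref{Sobolev}, while $\|\AA^{-1/4}g\|_2\le C\|g\|_{3/2}$ follows by duality against the imbedding $D(\AA^{1/4})\hookrightarrow L^3(\Omega)$, which holds in the exterior domain via the Stokes-operator calculus used by Hishida \cite{H94,H97}; the difference version $\|\AA^{-1/4}(f(U)-f(V))\|_2\le C(\|\nabla U\|_2+\|\nabla V\|_2)\|\nabla(U-V)\|_2$ is obtained the same way. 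Writing $f(U(s))=\AA^{1/4}g(s)$ with $\|g(s)\|_2\le C s^{-2(1-\frac3{4\kappa})}\|U\|_X^2$ and applying the second smoothing bound with $\delta'=\frac14$ and $\gamma\in\{\theta,\frac12\}$,
\ba
\Bigl\|\AA^{\theta}\!\int_0^t e^{-(t-s)\L^{\kappa}}f(U(s))\,ds\Bigr\|_2&\le C\|U\|_X^2\!\int_0^t(t-s)^{-\frac{3/2-\kappa}{\kappa}}s^{-2(1-\frac3{4\kappa})}\,ds,\\
\Bigl\|\nabla\!\int_0^t e^{-(t-s)\L^{\kappa}}f(U(s))\,ds\Bigr\|_2&\le C\|U\|_X^2\!\int_0^t(t-s)^{-\frac{3/4}{\kappa}}s^{-2(1-\frac3{4\kappa})}\,ds.
\ea
Because $\frac34<\kappa\le1$, the singularities at $s=0$ (exponent $2(1-\frac3{4\kappa})<1$) and at $s=t$ (exponents $\frac{3/2-\kappa}{\kappa}<1$ and $\frac{3/4}{\kappa}<1$) are integrable, and the surviving powers of $t$ are exactly $t^{0}$ and $t^{-(1-\frac3{4\kappa})}$; hence $\|\Phi(U)\|_X\le C\|\AA^{\theta}U_0\|_2+C\|U\|_X^2$ and $\|\Phi(U)-\Phi(V)\|_X\le C(\|U\|_X+\|V\|_X)\|U-V\|_X$. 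If $\|\AA^{\theta}U_0\|_2$ is small enough, $\Phi$ contracts $\{\|U\|_X\le2C\|\AA^{\theta}U_0\|_2\}$ and produces a unique fixed point $U$ solving \eqref{ne4} with $\|U\|_X<\infty$; strong continuity (analyticity) of the semigroup on $H^{\theta}$ and integrability of the Duhamel kernel give $U\in C([0,\infty);H^{\theta})$, and uniqueness in this class with the stated bounds follows from the same bilinear estimate and Gronwall's inequality. This already gives the $\esssup$ statement.

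The large-time limit is the delicate point, since the convolution integrals above are scaling critical (the surviving power of $t$ is $0$), so smallness must be used. For the linear part, comparability and the spectral theorem give $\|\AA^{\theta}e^{-t\L^{\kappa}}U_0\|_2^2\simeq\int_0^\infty e^{-2t\lambda^{\kappa}}\lambda^{2\theta}\,d\|E_\lambda U_0\|^2$ and $t^{2(1-\frac3{4\kappa})}\|\AA^{1/2}e^{-t\L^{\kappa}}U_0\|_2^2\simeq\int_0^\infty(t\lambda^{\kappa})^{2-\frac3{2\kappa}}e^{-2t\lambda^{\kappa}}\lambda^{2\theta}\,d\|E_\lambda U_0\|^2$; since $2-\frac3{2\kappa}>0$ for $\kappa>\frac34$, both integrands are dominated by $C\lambda^{2\theta}\in L^1(d\|E_\lambda U_0\|^2)$ and tend to $0$ pointwise as $t\to\infty$, so dominated convergence forces both linear terms to $0$. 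For the nonlinear part set $K\doteq\limsup_{t\to\infty}\bigl(\|\AA^{\theta}U(t)\|_2+t^{1-\frac3{4\kappa}}\|\nabla U(t)\|_2\bigr)\le 2\|U\|_X$. Splitting $\int_0^t=\int_0^A+\int_A^t$ with $A$ large and fixed: the piece $\int_0^A$ is $O\bigl((t-A)^{-(\cdot)}\bigr)\to0$ because $\int_0^A s^{-2(1-\frac3{4\kappa})}\,ds<\infty$, while on $\int_A^t$ one uses $\|\nabla U(s)\|_2^2\le s^{-2(1-\frac3{4\kappa})}\bigl(\sup_{\rho\ge A}\rho^{1-\frac3{4\kappa}}\|\nabla U(\rho)\|_2\bigr)^2$ together with the scaling-invariant Beta integrals above to bound the nonlinear contribution to $\limsup_{t\to\infty}$ by $C\bigl(\sup_{\rho\ge A}\rho^{1-\frac3{4\kappa}}\|\nabla U(\rho)\|_2\bigr)^2$; letting $A\to\infty$ gives $K\le CK^2$, and since $K\le2\|U\|_X$ is as small as desired, $K=0$, which is the asserted limit.

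The main obstacle I anticipate is establishing the smoothing and bilinear estimates with exponents sharp enough that the time convolutions converge — this is precisely where $\kappa>\frac34$ is forced — and then, because those estimates are scaling critical, upgrading boundedness to decay through the limsup self-improvement, which works only by virtue of the smallness of the data. The secondary technical care concerns the exterior-domain facts $\AA^{\beta}\simeq\L^{\beta}$ on $0\le\beta\le\frac12$ and $D(\AA^{1/4})\hookrightarrow L^3(\Omega)$, which rest on known Stokes-operator theory rather than on whole-space Fourier calculus.
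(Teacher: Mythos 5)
Your construction is, at its core, the same Kato-type contraction the paper runs: the same weighted space (weight $t^{1-\frac3{4\kappa}}$ on $\|\nabla U\|_2$ together with the sup of $\|\AA^{\frac54-\kappa}U\|_2$), the same equivalence $\|\AA^\beta\,\cdot\,\|_2\simeq\|\L^\beta\,\cdot\,\|_2$ for $0\le\beta\le\frac12$ (you get it from form equivalence plus Heinz--Kato, the paper from complex interpolation, cf.\ (\ref{bound})), and the same bilinear smoothing exponents: your factorization $f(U)=\AA^{1/4}g$ with $\|g\|_2\le C\|\nabla U\|_2^2$ and the bound $\|\AA^\gamma e^{-t\L^\kappa}\AA^{1/4}W\|_2\le Ct^{-(\gamma+\frac14)/\kappa}\|W\|_2$ reproduces exactly the paper's (\ref{aa1}) with $r=\frac32$ and the key estimate (\ref{intd}), and the resulting convolution kernels $(t-s)^{-(\frac3{2\kappa}-1)}$ and $(t-s)^{-\frac3{4\kappa}}$ coincide with those in (\ref{kkkk})--(\ref{kkk}). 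Where you genuinely diverge is the large-time limit: you leave the decay out of $X$ and recover it a posteriori by the limsup bootstrap $K\le CK^2$ with $K$ small, whereas the paper builds the decay into the definition of $X$ and checks that $F$ preserves it via the shifted Duhamel identity (\ref{tau}) together with (\ref{infinity}). Your bootstrap is correct --- the splitting $\int_0^A+\int_A^t$ works because the tail Beta integrals are scale-invariant while the $[0,A]$ piece decays like a negative power of $t$ --- and it is arguably cleaner; the paper's route has the advantage of treating the $t\to\infty$ and $t\to0$ limits by one and the same mechanism.

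The one concrete gap is at $t=0$. You claim $U\in C([0,\infty);H^{\frac54-\kappa})$ from ``integrability of the Duhamel kernel'', but the kernel integral $\int_0^t(t-s)^{-(\frac3{2\kappa}-1)}s^{-2(1-\frac3{4\kappa})}\,ds$ is scale-invariant (its exponents sum to $1$), so it equals a $t$-independent constant, and hence the $\AA^{\frac54-\kappa}$ norm of the Duhamel term is only $O(M^2)$, not $o(1)$, as $t\to0$ for a general element of your $X$. To get continuity at $t=0$ (and at every later time $\tau$, through the shifted identity) you need the additional qualitative information $\lim_{t\to0}t^{1-\frac3{4\kappa}}\|\nabla U(\tau+t)\|_2=0$; this holds for the linear part by a density argument --- that is the content of (\ref{aa3new}), whose exponent $\frac{1-2\delta}{2\kappa}$ at $\delta=\frac54-\kappa$ is exactly $1-\frac3{4\kappa}$ --- and it propagates through the iteration, which is precisely why the paper includes this vanishing condition in the definition of $X$. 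Adding that condition to your space (or verifying it for the fixed point by running your limsup argument at $t\to0^+$ instead of $t\to\infty$) closes the gap; everything else in your outline matches the paper's proof.
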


To derive the stability, we assume that $\alpha >0$ is sufficiently small, so that the buoyant force and external influence from nonhomogeneous boundary condition can be controlled by the viscous force and heat diffusion.
If the temperature on the conducting surface $\p\Omega$ is not small, it is infeasible to keep the stability. For example, instabilities  arise in  the Rayleigh-B\'enard thermal convection flow (see Lorenz \cite{L}, Chen and Price \cite{Chen2006} and Rabinowitz \cite{R}).
When $\kappa =1$, the existence of small strong solution in a Hilbert space  was studied  by Hishida \cite{H94}.

\section{Preliminaries}
Let $C$  be a generic constant, which is   independent of the quantities $U$, $u$, $T$,$V$, $v$, $T' $,$\tau$, $\tau_1$, $t>0$ and  $x\in \Omega$, but they may depend on initial vector field $U_0=(u_0,T_0)$.

\begin{Lemma} \label{LL1} Let $\K$ be a self-adjoint and positive operator in a Hilbert space $H$ and thus the fractional power operator
is defined as
\bbe \K^\beta = \int^\infty_0 \lambda^\beta d\hat E_{\lambda}\bee
with the domain
$  D(\K^\beta)$ under the graph norm
\bbe\|\phi\|_{D(\K^\beta)} = \|\phi\|_H+ \|\K^\beta\phi\|_H.\bee
Here $\hat E_{\lambda}$ denotes the spectral resolution of the unit determined by $\K$.
For $0\le \beta \le 1$, assume that $\hat H^\beta$,  the completion of $D(\K)$  in $\|\K^\beta \phi\|_H$, is a Banach space.
 Then, there holds the topological  identity relation
\bbe [ H, \hat H^\beta ] _\theta = \hat H^{\beta \theta}, \,\,0<\theta <1 \label{complex}
\bee
for $[\cdot,\cdot ]_\theta$ the complex interpolation functor.
\end{Lemma}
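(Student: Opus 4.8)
The plan is to reduce this to a classical statement about complex interpolation of domains of fractional powers of self-adjoint positive operators, combined with the hypothesis that $\hat H^\beta$ is complete under $\|\K^\beta\phi\|_H$. First I would recall the standard fact (see e.g. Fujita--Kato, Komatsu, or Triebel) that for a self-adjoint positive operator $\K$ on $H$ one has $[H, D(\K)]_\theta = D(\K^\theta)$ with equivalence of norms, the graph norm of $\K^\theta$ being comparable to $\|\phi\|_H + \|\K^\theta\phi\|_H$. More generally, $[D(\K^{a}), D(\K^{b})]_\theta = D(\K^{(1-\theta)a+\theta b})$; this follows directly from the spectral theorem, since $\K$ is unitarily equivalent to multiplication by a positive function on an $L^2$ space, and the complex interpolation of weighted $L^2$ spaces $L^2(w_0)$ and $L^2(w_1)$ is $L^2(w_0^{1-\theta}w_1^\theta)$.

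The one subtlety — and the reason the lemma is phrased with the completeness hypothesis — is that $\hat H^\beta$ is defined as the completion of $D(\K)$ in the \emph{homogeneous} seminorm $\|\K^\beta\cdot\|_H$, not the graph norm, and a priori this seminorm need not be a norm or need not yield a space that embeds in $H$. The hypothesis that $\hat H^\beta$ is a Banach space in this seminorm is exactly what rules out pathologies (it forces $\|\K^\beta\phi\|_H$ to be a genuine norm on $D(\K)$, equivalently $0$ is not an eigenvalue and the range behaves well). Under this hypothesis I would argue that on $D(\K)$ the homogeneous norm $\|\K^\beta\phi\|_H$ and the graph norm $\|\phi\|_H+\|\K^\beta\phi\|_H$ are \emph{equivalent}: one direction is trivial, and the reverse follows because, $0$ being bounded away from the spectrum in the relevant sense forced by completeness, $\|\phi\|_H \le C\|\K^\beta\phi\|_H$. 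Hence $\hat H^\beta = D(\K^\beta)$ with equivalent norms, and in particular $\hat H^1 = D(\K)$.

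With that identification in hand, the interpolation identity is immediate: apply the standard result $[H, D(\K)]_\theta = D(\K^\theta)$, rewrite the left side as $[H, \hat H^1]_\theta$, and the right side as $\hat H^\theta$; the general $\beta$ case follows by replacing $\K$ with $\K^\beta$ (still self-adjoint positive, and $\hat H^{\beta\theta}$ is then its $\theta$-homogeneous space), giving $[H, \hat H^\beta]_\theta = \hat H^{\beta\theta}$. The claim that the identity is \emph{topological} is precisely the statement that the norms are equivalent, which is built into the identification above.

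The main obstacle I anticipate is handling the homogeneous-versus-graph norm comparison cleanly: one must use the completeness hypothesis rather than any a priori spectral gap, since $\K$ here (being $\L$ or a fractional power, on an exterior domain) has spectrum accumulating at $0$. The argument is that the embedding $\hat H^\beta \hookrightarrow H$, which must hold for the complex interpolation functor to even make sense on the couple $(H,\hat H^\beta)$, together with completeness and the closed graph theorem, delivers $\|\phi\|_H \le C\|\K^\beta\phi\|_H$ on $D(\K)$ — and from there everything is the textbook spectral-multiplier computation. I would present the spectral-theorem reduction to multiplication operators as the conceptual core and cite a standard reference (Triebel, \emph{Interpolation Theory}, or Lunardi) for the weighted-$L^2$ interpolation identity rather than reproving it.
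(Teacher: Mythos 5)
There is a genuine gap, and it sits exactly where you placed the weight of the argument. You claim that the completeness hypothesis, the embedding $\hat H^\beta\hookrightarrow H$, and the closed graph theorem together force $\|\phi\|_H\le C\|\K^\beta\phi\|_H$ on $D(\K)$, hence $\hat H^\beta=D(\K^\beta)$ with equivalent norms. But the premise $\hat H^\beta\hookrightarrow H$ is false in the situation this lemma is built for, and no such inequality holds. Take $\K=-\Delta$ on $\mathbb{R}^3$ (or the Stokes operator on the exterior domain, which is the intended application) and $\beta=\tfrac12$: then $\hat H^{1/2}$ is the homogeneous space $\dot H^1$, which is a perfectly good Banach space realized inside $L^6$ via the Sobolev inequality, yet $\|\phi\|_2\le C\|\nabla\phi\|_2$ fails and $\hat H^{1/2}\not\subset H$. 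The interpolation couple $(H,\hat H^\beta)$ is legitimate because both spaces embed continuously into $L^2+L^6$ (this is precisely why the paper records the embedding (\ref{Sobolev}) before defining $H^\beta$, and restricts to $0\le\beta\le\tfrac12$); the couple is \emph{not} formed via an inclusion $\hat H^\beta\subset H$. Being a completion, $\hat H^\beta$ is automatically complete, so the "Banach space" hypothesis is really the requirement that the completion be faithfully realizable as a function space — it carries no spectral-gap information, and the operators here have essential spectrum down to $0$.

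Once that identification collapses, your reduction proves the wrong statement: $[H,D(\K^\beta)]_\theta=D(\K^{\beta\theta})$ with graph norms, which the paper explicitly lists as a \emph{separate} fact (citing Carracedo--Alix), distinct from (\ref{complex}). Indeed, if $\hat H^\beta$ coincided with $D(\K^\beta)$ the whole homogeneous framework of the paper would be pointless. The actual content of the lemma — and of Miyakawa's Theorem 2.4, which the paper invokes in lieu of a proof — is the homogeneous version: one works with the spectral representation directly on the couple $(H,\hat H^\beta)$ inside the ambient space $L^2+L^6$, shows $D(\K)$ is dense in the interpolation space, and computes the interpolation norm by the three-lines argument applied to $\lambda\mapsto\lambda^{\beta z}$ in the spectral calculus, without ever passing through a graph-norm comparison. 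Your weighted-$L^2$ multiplication-operator picture is the right conceptual core, but it must be run on the seminorms $\|\lambda^{\beta}\cdot\|$ versus $\|\cdot\|$ over the spectral measure, accepting that the target spaces are completions rather than subspaces of $H$; the step "homogeneous norm is equivalent to graph norm" must be deleted, not repaired.
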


When $\K$ is the Stokes operator $A$ and $\beta =\frac12$, this result has been proved by Miyakawa \cite[Theorem 2.4]{Miya82}. However, the proof of \cite[Theorem 2.4]{Miya82} also applies  for the operator $\K$. We thus omit the proof herein.  One may also refer to \cite[Theorem11.6.1]{in} for the topological identity \bbe [H, D(\K^\beta)]_\theta = D(\K^{\beta\theta}).\bee

\begin{Lemma} For $0\le \kappa \le \frac12$, we have that $H^\kappa$ is the completion of $D(\L)$ under the norm
$\|\L ^\kappa U\|_2$. That is,
\bbe \|\AA ^\kappa U \|_2 \le C \|\L^\kappa U \|_2,\,\,\, \|\L ^\kappa U \|_2 \le C \|\AA^\kappa U \|_2. \label{bound}\bee

\end{Lemma}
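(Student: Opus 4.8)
The goal is to show that the Hilbert space $H^\kappa$ — defined as the completion of $D(\AA)=D(\A)\times D(\B)$ in the norm $\|\AA^\kappa U\|_2$ — coincides, with equivalent norms, with the completion of $D(\L)$ in $\|\L^\kappa U\|_2$, for $0\le\kappa\le\frac12$. The plan is to deduce this from the complex interpolation identity of Lemma~\ref{LL1}, applied twice: once with $\K=\L$ and once (componentwise) with $\K=\A$ and $\K=\B$. First I would record that $\L$ is self-adjoint and positive by \eqref{ne1}--\eqref{positive}, so Lemma~\ref{LL1} with $\beta=\frac12$ gives $[H^0,\hat H^{1/2}]_\theta=\hat H^{\theta/2}$, where $H^0=L^2_\sigma(\Omega)^3\times L^2(\Omega)$ and $\hat H^{1/2}$ is the completion of $D(\L)$ in $\|\L^{1/2}U\|_2$. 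Taking $\theta=2\kappa\in[0,1]$ identifies the $\|\L^\kappa\cdot\|_2$-completion with $[H^0,\hat H^{1/2}]_{2\kappa}$. Similarly, since $\A$ and $\B$ are self-adjoint and positive, Lemma~\ref{LL1} identifies the $\|\A^\kappa\cdot\|_2$- and $\|\B^\kappa\cdot\|_2$-completions with the corresponding interpolation spaces, and hence (interpolation commutes with finite products) $H^\kappa=[H^0,H^{1/2}]_{2\kappa}$.

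The whole argument therefore reduces to the single endpoint statement $\hat H^{1/2}=H^{1/2}$ with equivalent norms, i.e. the case $\kappa=\frac12$ of \eqref{bound}: one must show $\|\AA^{1/2}U\|_2\le C\|\L^{1/2}U\|_2$ and $\|\L^{1/2}U\|_2\le C\|\AA^{1/2}U\|_2$ for $U=(u,T)\in D(\L)$. Both sides are controlled by Dirichlet energy: $\|\AA^{1/2}U\|_2^2=\|\A^{1/2}u\|_2^2+\|\B^{1/2}T\|_2^2=\|\nabla u\|_2^2+\|\nabla T\|_2^2$ since $\A$ is the Stokes operator and $\B=-\Delta$ with Dirichlet data, while from \eqref{pos} one has $\|\L^{1/2}U\|_2^2=\langle\L U,U\rangle=\|\nabla u\|_2^2+\|\nabla T\|_2^2-2\alpha\langle u/|x|,\nabla T\rangle$. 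The cross term is bounded above and below using the Hardy-type inequality \eqref{Lady}: $|2\alpha\langle u/|x|,\nabla T\rangle|\le 2\alpha\|u/|x|\|_2\|\nabla T\|_2\le 4\alpha\|\nabla u\|_2\|\nabla T\|_2\le 2\alpha(\|\nabla u\|_2^2+\|\nabla T\|_2^2)$, so that $(1-2\alpha)\|\AA^{1/2}U\|_2^2\le\|\L^{1/2}U\|_2^2\le(1+2\alpha)\|\AA^{1/2}U\|_2^2$. Since $0<\alpha<\frac12$ is assumed throughout, this gives the two-sided bound at $\kappa=\frac12$, hence $\hat H^{1/2}=H^{1/2}$ as Banach spaces with equivalent norms.

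Combining the two interpolation identities with this endpoint equality yields, for every $\kappa\in[0,\frac12]$,
\[
\text{completion of }D(\L)\text{ in }\|\L^\kappa\cdot\|_2=[H^0,\hat H^{1/2}]_{2\kappa}=[H^0,H^{1/2}]_{2\kappa}=H^\kappa,
\]
with equivalence of norms; tracking the interpolation constants (which depend only on $\alpha$) gives the explicit inequalities \eqref{bound}. I expect the main technical point to be the verification that $\hat H^{1/2}$ and $H^{1/2}$ are genuine Banach spaces to which Lemma~\ref{LL1} applies — i.e. that $\|\L^{1/2}\cdot\|_2$ and $\|\AA^{1/2}\cdot\|_2$ are norms (not merely seminorms) on $D(\L)$ whose completions embed in $L^2$; this follows from the strict positivity bounds \eqref{positive} and \eqref{Sobolev}, but it is the step that must be stated carefully before the interpolation machinery can be invoked. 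The endpoint estimate itself is then the short Hardy-inequality computation above, and everything else is a formal consequence of Lemma~\ref{LL1}.
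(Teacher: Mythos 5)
Your proposal is correct and follows essentially the same route as the paper: establish the two-sided equivalence $\|\L^{1/2}U\|_2\simeq\|\nabla U\|_2=\|\AA^{1/2}U\|_2$ at the endpoint via \eqref{pos}, \eqref{positive} and the Hardy-type inequality \eqref{Lady}, then transfer to all $\kappa\in[0,\frac12]$ by the complex interpolation identity of Lemma~\ref{LL1}. The paper's proof is just a terser version of the same argument, with the cross-term estimate absorbed into the already-derived inequalities \eqref{pos}--\eqref{positive}.
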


\begin{proof} By (\ref{positive}) and since  $U\in D(\L)$, we have
\bbe \|\nabla U\|_2 \le C\|\L^\frac12 U\|_2.
\bee
By (\ref{pos}) and (\ref{Lady}), we have
\bbe \|\L^\frac12 U\|_2 \le C\|\nabla  U\|_2.
\bee
Denoting  by $\tilde H^\beta$ the completion of $D(\L)$ under the norm $\|\L^\beta U\|_2$ for $0\le \beta\le \frac12$, we thus have
$\tilde H^\frac12 =H^\frac12$. Moreover, since $H^0=\tilde H^0= L^2_\sigma(\Omega)^3\times L^2(\Omega)$, it follows from Lemma \ref{LL1} that
\bbe H^{\frac\theta2}= [H^0, H^\frac12]_\theta = [\tilde H^0,\tilde H^\frac12]_\theta= \tilde H^{\frac\theta2}. \bee
This gives the validity of  (\ref{bound}).
\end{proof}

\begin{Lemma} For $\frac12\le  \kappa \le 1$,  $0<\beta \le \frac1{2\kappa }$, $ 0\le \gamma  \le \kappa$,  $0<\delta <  \frac12$ and  $\frac12\le \frac1r <\frac 56$, we have
\begin{align}
 \|\nabla  e^{-t \L ^\kappa  } U\|_{2} &\le   C t^{-\beta  } \|\L  ^{\frac12-\kappa  \beta}U\|_{2}, \,\, U\in H^{\frac12-\kappa \beta},\label{aa2}
 \\
 \|\L^\gamma  e^{-t \L ^\kappa  } U\|_{2} &\le   C t^{-\frac\gamma\kappa  } \|U\|_{2}, \,\, \,\, U\in L^2_\sigma(\Omega)^2\times L^2(\Omega),\label{aa1aa}
  \\
 \|\L ^{\gamma  } e^{-t \L ^\kappa  } \!\!(Pu,T)\|_{2} &\le   t^{-\frac\gamma {\kappa }-\frac3{2\kappa } (\frac1r-\frac12) } \|(u,T)\|_r,  \,\, (u,T)\in L^r(\Omega)^3\!\times \!L^r(\Omega),\label{aa1}
\\
  \lim_{t\to 0} t^{\frac{1-2\delta}{2\kappa }}\|\nabla  e^{-t\L^\kappa }U\|_{2}&=0,\,\,\,\,  U\in H^{\delta},\label{aa3new}
  \\
   \lim_{t\to \infty} \|e^{-t\L^\kappa}U\|_2 &=0,\,\,\,\, U\in L^2_\sigma(\Omega)^2\times L^2(\Omega).\label{infinity}
\end{align}

\end{Lemma}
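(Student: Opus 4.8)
The plan is to read off all five bounds from the functional calculus of the self\nobreakdash-adjoint positive operator $\L$, used together with the norm equivalence $\|\nabla U\|_2\sim\|\L^{1/2}U\|_2$ established in the proof of the preceding lemma (see (\ref{pos})--(\ref{positive}), (\ref{Lady}) and (\ref{bound}) with $\kappa=\tfrac12$), a Sobolev-type embedding for the abstract scale $H^{\beta}$, and the elementary inequality $\sup_{\mu>0}\mu^{a}e^{-\tau\mu}=(a/e)^{a}\tau^{-a}\le C\tau^{-a}$ for $a\ge0$, $\tau>0$, with $C$ absolute on bounded ranges of $a$. Writing $E_{\lambda}$ for the spectral resolution of $\L$ and substituting $\mu=\lambda^{\kappa}$ gives $\|\L^{\gamma}e^{-t\L^{\kappa}}\|_{L^2\to L^2}=\sup_{\lambda>0}\lambda^{\gamma}e^{-t\lambda^{\kappa}}=\sup_{\mu>0}\mu^{\gamma/\kappa}e^{-t\mu}\le Ct^{-\gamma/\kappa}$, which is (\ref{aa1aa}). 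For (\ref{aa2}) I would bound $\|\nabla e^{-t\L^{\kappa}}U\|_2\le C\|\L^{1/2}e^{-t\L^{\kappa}}U\|_2$, write $\L^{1/2}e^{-t\L^{\kappa}}=\bigl(\L^{\kappa\beta}e^{-t\L^{\kappa}}\bigr)\L^{1/2-\kappa\beta}$ --- legitimate because $0<\kappa\beta\le\tfrac12$, so $0\le\tfrac12-\kappa\beta<\tfrac12$ and by (\ref{bound}) the quantity $\L^{1/2-\kappa\beta}U$ is meaningful for $U\in H^{1/2-\kappa\beta}$ --- and then apply the scalar inequality to $\L^{\kappa\beta}e^{-t\L^{\kappa}}$ with exponent $\beta$.

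The real content is (\ref{aa1}). I would first prove the embedding $H^{\beta}\hookrightarrow L^{q}(\Omega)^3\times L^{q}(\Omega)$ with $\tfrac1q=\tfrac12-\tfrac{2\beta}{3}$ for $0\le\beta\le\tfrac12$: the endpoint $\beta=0$ is $H^{0}=L^2_\sigma(\Omega)^3\times L^2(\Omega)$, the endpoint $\beta=\tfrac12$ is $H^{1/2}\hookrightarrow L^{6}$, which comes from $\|\nabla U\|_2\sim\|\L^{1/2}U\|_2$ and the Sobolev inequality (\ref{Sobolev}), and the intermediate range follows by complex interpolation, since Lemma \ref{LL1} with $\K=\L$ yields $[H^{0},H^{1/2}]_{\theta}=H^{\theta/2}$ while $[L^{2},L^{6}]_{\theta}=L^{p(\theta)}$ with $\tfrac1{p(\theta)}=\tfrac12-\tfrac{\theta}{3}$, and interpolating the two endpoint inclusions gives $H^{\theta/2}\hookrightarrow L^{p(\theta)}$; set $\beta=\theta/2$. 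Given $\tfrac12\le\tfrac1r<\tfrac56$, put $\beta=\tfrac32(\tfrac1r-\tfrac12)\in[0,\tfrac12)$, so the conjugate exponent obeys $\tfrac1{r'}=\tfrac12-\tfrac{2\beta}{3}$, i.e. $H^{\beta}\hookrightarrow L^{r'}$. Then, for $(u,T)\in L^{r}\cap L^{2}$ and any $W\in H^{0}$, the self-adjointness of $e^{-(t/2)\L^{\kappa}}$, the fact that its first component stays solenoidal so that $\langle Pu,\cdot\rangle=\langle u,\cdot\rangle$ against it, H\"older's inequality, the embedding $H^{\beta}\hookrightarrow L^{r'}$ and (\ref{aa1aa}) give
\[
\bigl|\langle e^{-(t/2)\L^{\kappa}}(Pu,T),W\rangle\bigr|=\bigl|\langle(u,T),e^{-(t/2)\L^{\kappa}}W\rangle\bigr|\le\|(u,T)\|_{r}\,\|e^{-(t/2)\L^{\kappa}}W\|_{r'}\le C\,t^{-\beta/\kappa}\,\|(u,T)\|_{r}\,\|W\|_{2}.
\]
Taking the supremum over $\|W\|_2\le1$ gives $\|e^{-(t/2)\L^{\kappa}}(Pu,T)\|_2\le Ct^{-\frac{3}{2\kappa}(\frac1r-\frac12)}\|(u,T)\|_{r}$, which extends to all $(u,T)\in L^{r}$ by density; composing with $\L^{\gamma}e^{-(t/2)\L^{\kappa}}$ and invoking (\ref{aa1aa}) once more supplies the additional factor $t^{-\gamma/\kappa}$ and yields (\ref{aa1}).

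The two limits are soft. For (\ref{aa3new}) the computation underlying (\ref{aa2}), with $\beta=\tfrac{1-2\delta}{2\kappa}\in(0,\tfrac1{2\kappa})$, already gives $t^{(1-2\delta)/(2\kappa)}\|\nabla e^{-t\L^{\kappa}}U\|_2\le C\|\L^{\delta}U\|_2$ for all $t>0$; since $D(\L)$ is dense in $H^{\delta}$ and $t^{(1-2\delta)/(2\kappa)}\|\nabla e^{-t\L^{\kappa}}U'\|_2\le Ct^{(1-2\delta)/(2\kappa)}\|\L^{1/2}U'\|_2\to0$ for $U'\in D(\L)$, a standard density argument (approximating $U$ in $H^{\delta}$ by such $U'$ and absorbing $\|\nabla e^{-t\L^{\kappa}}(U-U')\|_2$ by the uniform bound) gives the limit. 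For (\ref{infinity}) one has $\|e^{-t\L^{\kappa}}U\|_2^{2}=\int_{[0,\infty)}e^{-2t\lambda^{\kappa}}\,d\|E_{\lambda}U\|_2^{2}$; the positivity estimate (\ref{positive}) forces $\ker\L=\{0\}$, because $\L U=0$ implies $\nabla U=0$ and a field with vanishing gradient on the unbounded domain $\Omega$ must be zero, so the spectral measure carries no mass at $\lambda=0$ and dominated convergence yields $\|e^{-t\L^{\kappa}}U\|_2\to0$.

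I expect (\ref{aa1}) to be the main obstacle, for two reasons: one must first manufacture the Sobolev embedding for the purely spectral scale $H^{\beta}$ out of its two endpoints through the interpolation identity of Lemma \ref{LL1}, and one must then exploit the self-adjointness of the semigroup to trade the desired $L^{r}\!\to\!L^{2}$ bound for an $L^{2}\!\to\!L^{r'}$ bound, on which the smoothing $\|\L^{\beta}e^{-s\L^{\kappa}}\|_{L^2\to L^2}\le Cs^{-\beta/\kappa}$ is already available. The remaining four estimates follow essentially directly from the functional calculus together with the norm equivalence $\|\nabla\,\cdot\,\|_2\sim\|\L^{1/2}\,\cdot\,\|_2$.
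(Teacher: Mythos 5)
Your proposal is correct, and for (\ref{aa2}), (\ref{aa1aa}), (\ref{aa3new}) and the duality step of (\ref{aa1}) it follows the same architecture as the paper: functional calculus for the self-adjoint positive $\L$, the equivalence $\|\nabla U\|_2\sim\|\L^{1/2}U\|_2$, and a density argument for the limit at $t\to0$. You diverge in two places. First, in (\ref{aa1}) you bound $\|e^{-(t/2)\L^{\kappa}}W\|_{r'}$ through the abstract embedding $H^{\beta}\hookrightarrow L^{r'}$, manufactured by complex interpolation (Lemma \ref{LL1}) between $H^{0}=L^{2}$ and $H^{1/2}\hookrightarrow L^{6}$, followed by the $L^{2}$-smoothing $\|\L^{\beta}e^{-(t/2)\L^{\kappa}}W\|_2\le Ct^{-\beta/\kappa}\|W\|_2$; the paper instead uses the elementary Lebesgue interpolation $\|V\|_{q}\le\|V\|_{2}^{\theta}\|V\|_{6}^{1-\theta}$ with $\theta=\tfrac52-\tfrac3r$ applied to $V=e^{-t\L^{\kappa}}W$, together with (\ref{Sobolev}) and (\ref{aa2}). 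Both give the same exponent $-\tfrac3{2\kappa}(\tfrac1r-\tfrac12)$; the paper's route is more elementary (no interpolation functor needed), while yours isolates a reusable Sobolev embedding for the spectral scale. Second, for (\ref{infinity}) the paper approximates $U$ by $V\in C^{\infty}_{0,\sigma}(\Omega)^3\times C^{\infty}_{0}(\Omega)$ and uses the $L^{3/2}\!\to\!L^{2}$ decay already established, whereas you argue spectrally: $\|e^{-t\L^{\kappa}}U\|_2^2=\int e^{-2t\lambda^{\kappa}}d\|E_{\lambda}U\|_2^2\to\|E_{\{0\}}U\|_2^2$ by dominated convergence, and $E_{\{0\}}=0$ because (\ref{positive}) forces $\ker\L=\{0\}$ (a constant $L^{2}$ field on the unbounded $\Omega$ vanishes). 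This is a clean, self-contained alternative that does not rely on the $L^{r}$ estimate; just note that it uses the identification of the point mass of the spectral measure at $0$ with the projection onto $\ker\L$. No gaps either way.
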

\begin{proof}
The  semigroup  $e^{-t\L^\kappa }$ being analytic  is confirmed by (\ref{aa1aa}) with $\gamma=\kappa $.
Equation (\ref{aa2}) is
  given   by  (\ref{positive})  together with the spectral representation:
\bbe
\|\nabla e^{-t \L ^\kappa } U\|_2 &\le& C \| \L ^{\frac12} e^{-t \L ^\kappa }U\|_2\nonumber
\\
 &\le &C t^{-\beta}\|\left(\int^\infty_0 (t\lambda^\kappa  )^\beta e^{-t \lambda ^\kappa  } dE_\lambda\right) \L ^{\frac12-\kappa  \beta}U \|_2.
\bee
By   the spectral representation, we obtain   (\ref{aa1aa}) from the following
\bbe\label{nne1} \L ^{\gamma} e^{-t \L ^\kappa } &=&
t^{-\frac\gamma{\kappa }}\int^\infty_0 (t\lambda^\kappa  )^\frac\gamma{\kappa } e^{-t \lambda ^\kappa  } dE_\lambda.\bee

  For (\ref{aa1}) with $\gamma=0$ and $r\ne 2$, we employ  H\"older inequality,   Sobolev imbedding (\ref{Sobolev}),   (\ref{aa2}) and  (\ref{aa1aa}), and take  $\frac 1q=1-\frac1r$  and $\theta =\frac52-\frac3r$ to produce 
\bbe \langle  e^{-t \L ^\kappa } (Pu,T),V\rangle &=& \langle  (u,T), e^{-t \L ^\kappa } V\rangle\nonumber
\\
&\le&
 \|(u,T)\|_r \| e^{-t \L ^\kappa } V\|_q\nonumber
\\
&\le &  C\|(u,T)\|_r \| e^{-t \L ^\kappa } V\|_2^\theta\|\nabla  e^{-t \L ^\kappa } V\|_2^{1-\theta}\nonumber
\\
&\le& C  t^{-\frac{1-\theta}{2\kappa}} \|(u,T)\|_r\|V\|_2 
\bee
for $V \in L_\sigma^2(\Omega)^2\times L^2(\Omega)$. Therefore,  since $1-\theta = 3(\frac1r-\frac12)$, we have
\bbe \|e^{-t\L^\kappa  } (Pu,T)\|_2
\le Ct^{-\frac{3}{2\kappa }(\frac1r-\frac12)} \|(u,T)\|_r\label{aa1aaa}
\bee
 Thus the combination of (\ref{aa1aa}) and (\ref{aa1aaa}) gives  (\ref{aa1}) due to the semigroup property of $e^{-t\L^\kappa }$.

Moreover, to show (\ref{aa3new}), we take  $V\in D(\L )$ and  employ (\ref{aa2})-(\ref{aa1}) to produce
\bbe\nonumber  t^{\frac{1-2\delta}{2\kappa }}\|\nabla  e^{-t \L ^\kappa }U\|_{2}&\le & t^{\frac{1-2\delta}{2\kappa }}\|\L ^{\frac12-\delta}  e^{-t \L ^\kappa } (\L ^\delta U-\L ^\delta V)\|_{2}+  t^{\frac{1-2\delta}{2\kappa }}\|  e^{-t \L ^\kappa } \L ^\frac12 V\|_{2}
\\
&\le & \|\L ^\delta U-\L ^\delta V\|_{2}+  t^{\frac{1-2\delta}{2\kappa }}\|  \L ^\frac12 V\|_{2}.\label{newaa}
\bee
This gives (\ref{aa3new}),  after taking $t\to 0$ and using the density of $D(\L )$ in $H^\delta$.

Finally, for the proof of (\ref{infinity}), we employ (\ref{aa1aa}) and the derivation of (\ref{aa1aaa}) to obtain
\bbe
\|e^{-t L^\kappa} U\|_2&\le &\|e^{-t L^\kappa} (U-V)\|_2+\|e^{-t L^\kappa} V\|_2\nonumber
\\
&\le &\|U-V\|_2+Ct^{-\frac1{4\kappa}}\|V\|_{\frac32}
\bee
for $V \in C^\infty_{0,\sigma}(\Omega)^3 \times C^\infty_0(\Omega)$, which is dense in $H^0$. Letting $t\to \infty$ in the previous equation, we obtain (\ref{infinity}) due to the density.
\end{proof}

\section{Proof of the Theorem 1.1}

To prove Theorem \ref{Th1} or  the small solution existence, we adopt the operator
\bbe
FU(t)\doteq e^{-t\L^\kappa  } U_0 -\int^t_0 e^{-(t-s)\L ^\kappa  } f(U) ds.\label{new1}
\bee
We shall use the Banach contraction mapping principle to show the existence of the unique fixed point of $F$ in the complete metric space
\be
X=&& \left\{
  U\in C([0,\infty); H^{\frac54-\kappa }); \,\,\lim_{t\to 0} t^{1-\frac3{4\kappa }} \|\nabla U(\tau +t)\|_{2}=0\,\,\mbox{ for } \tau\ge 0, \right.
  \\
&&\hspace{4mm} \|U\|_X =\esssup_{t>0} \left(t^{1-\frac3{4\kappa }} \|\nabla U(t)\|_{2} + \|\AA ^{\frac54-\kappa } U(t)\|_{2}\right) \le M,
\\
&&\hspace{4mm} \left. \lim_{t\to \infty}\left(t^{1-\frac3{4\kappa }} \|\nabla U(t)\|_{2} + \|\AA ^{\frac54-\kappa } U(t)\|_{2}\right) =0\right\},
\ee
where  the bound $M$  is to be determined afterwards.

      To estimate the nonlinear integrand of (\ref{new1}), we employ (\ref{bound}), (\ref{aa1}),  H\"older inequality and (\ref{Sobolev})  to produce, for $0\le \beta\le \frac12$ and  $U=(u,T)\in X$,
   \bbe \nonumber\|\AA ^{\beta}e^{-t \L ^\kappa  } f(U)\|_2 &\le& \|\L ^{\beta}e^{-t \L ^\kappa  } f(U)\|_2
   \\ \nonumber
   &\le &Ct^{-\frac{\beta}{\kappa }-\frac3{2\kappa }(\frac23-\frac12)}(\|u\cdot \nabla u\|_{\frac32}+ \|u\cdot \nabla T\|_{\frac32})
    \\ \nonumber
   &\le &Ct^{-\frac\beta\kappa  -\frac1{4\kappa }}\|u\|_6 \| \nabla U\|_{2}
   \\ \label{intd}
   &\le &Ct^{-\frac\beta\kappa  -\frac1{4\kappa }}\|\nabla u\|_{2}\|\nabla U\|_2.
   \bee
 Hence we have, by (\ref{aa1aa}),  (\ref{intd})  and the condition  $\frac 34<\kappa  \le 1$,
\bbe \nonumber
\lefteqn{\|\L ^{\frac54-\kappa }FU(t)\|_{2}}\\
&\le &\|\L ^{\frac54-\kappa } e^{-t \L ^\kappa  } U_0\|_{2}+ C\int^t_0 (t-s)^{-(\frac{3}{2\kappa }-1)} \| \nabla u(s) \|_{2}\|\nabla U(s)\|_2ds\nonumber
\\
&\le & \|\L ^{\frac54-\kappa } U_0\|_{2}\nonumber
+ CM\int^t_0   (t-s)^{-(\frac{3}{2\kappa }-1)} s^{-2(1-\frac3{4\kappa })}ds \esssup_{0<s<t} s^{1-\frac3{4\kappa }}\|\nabla u(s)\|_{2}\nonumber
\\
&\le & \|\L^{\frac54-\kappa } U_0\|_{2}+  CM\esssup_{0<s<t} s^{1-\frac3{4\kappa }}\|\nabla u(s)\|_{2}\label{kkkk}
\bee
and, by (\ref{positive}) and (\ref{intd}),
\bbe
  \|\nabla FU(t)\|_{2}\nonumber
&\!\!\le\!\!&  \|\L ^{\frac12} e^{-t \L ^\kappa  }U_0\|_{2}+ C\int^t_0 (t-s)^{-\frac3{4\kappa }}  \| \nabla u(s) \|_{2}\|\nabla U(s)\|_2ds\nonumber
\\
&\!\!\le\!\!&  t^{-(1-\frac3{4\kappa })} \|\L ^{\frac12-\kappa (1-\frac3{4\kappa })} U_0\|_{2}\nonumber
\\
&&+ CM\int^t_0 (t-s)^{-\frac3{4\kappa }}  s^{-2(1-\frac3{4\kappa })}ds \esssup_{0<s<t} s^{1-\frac3{4\kappa }}\|\nabla u(s)\|_{2}\nonumber
\\
&\!\!\le\!\!& \!\! t^{-(1-\frac3{4\kappa })} \|\L ^{\frac54-\kappa } U_0\|_{2}\!+\! CM  t^{-(1-\frac3{4\kappa })}\esssup_{0<s<t} s^{1-\frac3{4\kappa }}\|\nabla u(s)\|_{2}.\hspace{5mm}\label{kkk}
\bee
Hence, by (\ref{bound}), we have
\bbe \|Fu\|_X \le  \|\AA ^{\frac54-\kappa } U_0\|_{2}+ C M^2\le M,
\bee
provided that  the bound $M$ is sufficiently small and
\bbe \|\AA ^{\frac54-\kappa } U_0\|_{2}\le \frac M2.\bee

Moreover, for the continuity at $\tau\ge 0$ and the zero limit, we see that
 \bbe FU(t+\tau) = e^{-t\L^\kappa } FU(\tau) - \int^t_0 e^{-(t-s)\L ^\kappa  } f( U(s+\tau)) ds\label{tau}
 \bee
due to the semigroup property. 
Therefore, by (\ref{bound}),  (\ref{aa3new}), the derivation of (\ref{kkkk})-(\ref{kkk})  and the strong continuity of the analytic semigroup,  we have
\be
\lefteqn{\|\AA ^{\frac54-\kappa }(FU(t+\tau)-FU(\tau))\|_{L_2}}\\
&\le & C\|\L ^{\frac54-\kappa }(FU(t+\tau)-FU(\tau))\|_{L_2}
\\
&\le& \| (e^{-t \L ^\kappa  }-1) \L ^{\frac54-\kappa } FU(\tau)\|_{2}+ C M\esssup_{0<s<t} s^{1-\frac3{4\kappa }}\|\nabla u(s+\tau)\|_{2} \to 0,
\ee
\be
 t^{1-\frac3{4\kappa }}\|\nabla FU(t)\|_{2}
&\le & t^{1-\frac3{4\kappa }}\|\nabla  e^{-t\L^\kappa  }U_0\|_{2}+ C M\esssup_{0<s<t} s^{1-\frac3{4\kappa }}\|\nabla u(s)\|_{2} \to 0,
\ee
as $t\to 0$.

To show  the decay property, for any $\epsilon >0$, there exist $\tau>0$  and $\tau_1>\tau$ so that
\bbe  t^{1-\frac3{4\kappa }} \|\nabla u(t)\|_{2}  \le \epsilon,\,\, t \ge \tau,
\\
\| e^{-\frac12t \L ^\kappa  } \L ^{\frac54-\kappa } FU(\tau)\|_{2}\le \epsilon,\,\, t \ge \tau_1
\bee
due to   (\ref{infinity}) and the condition $U\in X$.
 Following  the  derivation of (\ref{kkk}) and (\ref{kkkk}) in (\ref{tau}), we have
 \be
 \lefteqn{\|\AA ^{\frac54-\kappa }FU(t+\tau)\|_2 +(t+\tau)^{1-\frac3{4\kappa }} \|\nabla FU(t+\tau)\|_{2}}
 \\
 &\le&  C\| e^{-\frac12t \L ^\kappa  } \L ^{\frac54-\kappa } FU(\tau)\|_{2}+ CM (1+\frac\tau t)^{1-\frac3{4\kappa }}\esssup_{0<s<t}s^{1-\frac3{4\kappa }} \|\nabla u(s+\tau)\|_{2}
 \\
 &\le & C \epsilon +CM (1+\frac\tau{\tau_1})^{1-\frac3{4\kappa }}\epsilon.
 \ee
 The small constant $M$ can be assumed as $M\le 1$. Thus we have, for any small $\epsilon >0$,
 \bbe \lim_{t\to\infty}\left( \|\AA ^{\frac54-\kappa }FU(t)\|_2 +t^{1-\frac3{4\kappa }} \|\nabla FU(t)\|_{2}\right)\le C\epsilon. \bee
 This gives the desired decay property.
Collecting terms, we have the injection property $F: X\mapsto X$.

Similarly, for $U,\, V\in X$,
we have the contraction property
\be \|FU-FV\|_X &\le& C(\|U\|_X +\|V\|_X) \|U-V\|_X
\\
&\le& C M \|U-V\|_X\le \frac12 \|U-V\|_X,
\ee
provided that $M$ is sufficiently small.
We thus have the desired  unique solution existence due to the Banach contraction mapping principle. The proof of Theorem \ref{Th1} is complete.

\

\noindent {\bf Acknowledgment.}
This research was supported by The Shenzhen Natural Science Fund of China (Stable Support Plan Program 20220805175116001).

\




\end{document}